\newtheorem{theo}{Theorem}[section]
\newtheorem{lem} [theo]{Lemma}
\newtheorem{re}[theo]{Remark}
\makeatletter \@addtoreset{equation}{section}
\begin{document}
\begin{center}
{\Large\bf Upper bounds of dual flagged Weyl characters}

\vskip 6mm
{\small  Zhuowei Lin, Simon C.Y. Peng,  Sophie C.C. Sun }

\end{center}

\begin{abstract}

For a  subset $D$  of boxes in  an  $n\times n$ square grid,  let $\chi_{D}(x)$ denote
the dual character of the flagged Weyl module associated to $D$. It is known that $\chi_{D}(x)$
specifies to a Schubert polynomial (resp.,  a key polynomial) in the case
when $D$ is the Rothe diagram of a permutation (resp., the skyline diagram of a composition).
One can naturally define a lower  and an upper bound of  $\chi_{D}(x)$.
M{\'e}sz{\'a}ros, St. Dizier and Tanjaya conjectured that
 $\chi_{D}(x)$ attains the upper bound if and only if $D$ avoids a certain subdiagram.
We provide a proof of  this conjecture.

\end{abstract}

\vskip 3mm

\noindent {\bf Keywords:}   flagged Weyl module, dual character, upper bound

\vskip 3mm

\noindent {\bf AMS Classifications:} 05E05, 14N15

\section{Introduction}

We adopt the notation $[n]=\{1,2,\ldots, n\}$. A diagram  in a square grid $[n]\times [n]$ means a subset $D$ of boxes in $[n]\times [n]$. To a diagram $D$, the associated flagged Weyl module $\mathcal{M}_{D}$ is a module of the group $B$ of invertible upper-triangular matrices over $\mathbb{C}$ \cite{kraskiewicz1987foncteurs, kraskiewicz2004schubert, magyar1998schubert}.  The dual character of $\mathcal{M}_{D}$, denote $\chi_{D}(x)$, is a polynomial in $x_1,\ldots, x_n$, which specifies to the Schubert polynomial $\mathfrak{S}_w(x)$ when $D$ is the Rothe diagram of a permutation $w$, and to the key polynomial $\kappa_\alpha(x)$ when $D$ is the skyline diagram of a composition $\alpha$. The flagged Weyl modules have recently served as an important tool in the study of combinatorial properties of Schubert polynomials or key polynomials, see for example \cite{FG-1, fan2022upper, 2018Schubert, fink2021zero, meszaros2021principal}.

By the construction of flagged Weyl modules, it is natural to define a lower bound and  an upper bound of the dual character $\chi_{D}(x)$. When $\chi_{D}(x)$ is a Schubert polynomial (that is, $D$ is the Rothe diagram of a permutation),  Fink,  M{\'e}sz{\'a}ros  and  St. Dizier \cite{fink2021zero} found a criterion of when $\chi_{D}(x)$ attains the lower bound. When  $\chi_{D}(x)$ is a key polynomial (that is, $D$ is the skyline diagram of a composition), a criterion of when $\chi_{D}(x)$ attains the lower bound was given  by Hodges and Yong \cite{hodges2023multiplicity}. In the opposite direction, Fan and Guo \cite{fan2022upper} proved an upper bound criterion  in the case when $\chi_{D}(x)$ is a Schubert polynomial or a key polynomial.
For a general diagram $D$, M{\'e}sz{\'a}ros,   St. Dizier  and   Tanjaya \cite[Conjecture 29]{meszaros2021principal} conjectured that $\chi_{D}(x)$ reaches the upper bound if and only if $D$ avoids a certain single subdiagram. The aim of this paper is to prove this conjecture.

To state the conjecture of M{\'e}sz{\'a}ros,   St. Dizier  and   Tanjaya, let us give a brief overview of the flagged Weyl module $\mathcal{M}_{D}$ for a diagram $D\subseteq [n]\times [n]$.
 We use $(i, j)$ to denote the box in row $i$ and column $j$ in the matrix coordinate. We may alternatively represent $D$ by a sequence
 $D=(D_{1}, D_{2},\ldots, D_{n})$ of subsets of $[n]$. Precisely, $i\in D_j$
 if and only if the box $(i,j)$ belongs to $D$. For example, the diagram in Figure \ref{fig:enter-label-1122} can be represented as $D=(\{2,3,4\},\emptyset,\{1,2\},\{3\})$.
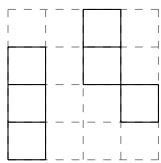
\begin{figure}[ht]
    \centering
    \begin{align*}
	\begin{tikzpicture}
		\centering
		\draw[dashed,step=0.5,help lines] (-1,-1) grid (1,1);
		\draw[black,step=0.5] (-1,-1) grid (-0.5,0.5);
		\draw[black,step=0.5] (0,0) grid (0.5,1);
		\draw[black,step=0.5] (0.5,-0.5) grid (1,0);
	\end{tikzpicture}
\end{align*}
    \caption{A diagram in $[4]\times [4]$.}
    \label{fig:enter-label-1122}
\end{figure}

For two subsets $R=\{r_1<r_2<\cdots<r_m\}$ and $S= \{s_1<s_2<\cdots<s_m\} $ of $[n]$ with the same cardinality, we write $R\leq S$ (called the Gale order) if  $r_i\leq s_i$ for $1\leq i\leq m$.
For two diagrams $C=(C_{1},C_{2},\ldots, C_{n})$ and $D=(D_{1},D_{2},\ldots,D_{n})$, we say $C\leq D$ if $C_{j}\leq D_{j}$ for each $1\leq j\leq n$.

Let $\mathrm{GL}(n,\mathbb{C})$ be the general linear group of $n\times n$ invertible matrices on $\mathbb{C}$, and $B$  the Borel subgroup of $\mathrm{GL}(n,\mathbb{C})$ consisting of all the upper-triangular matrices. Let $Y$ denote the upper-triangular matrix of variables $y_{ij}$ ($1\leq i\leq j\leq n$):
\begin{equation*}
	Y=
	\begin{bmatrix}
		y_{11}&y_{12}&\ldots&y_{1n}\\
		0&y_{22}&\ldots&y_{2n}\\
		\vdots&\vdots&\ddots&\vdots\\
		0&0&\ldots&y_{nn}
	\end{bmatrix}.
\end{equation*}
Write  $\mathbb{C} [Y]$ for the space of polynomials over $\mathbb{C}$ in the variables  $\{y_{ij}\}_{i\leq j}$. Define the (right) action of $B$   on $\mathbb{C} [Y]$ by   $f(Y)\cdot b=f(b^{-1}\cdot Y)$, where $b\in B$ and  $f\in \mathbb{C} [Y]$. For two subsets $R$ and $S$ of $[n]$ with the same cardinality, we use $Y^{R}_{S}$ to denote the submatrix of $Y$ obtained by restricting to rows indexed by $R$ and columns indexed by $S$. It is easily checked that $Y^{R}_{S}\neq 0$ if and only if $R\leq S$.
For a diagram $D=(D_{1},D_{2},\ldots, D_{n})$, denote
\[
\mathrm{det}\left(Y_{D}^C\right)=\prod_{j=1}^{n}\mathrm{det}\left(Y_{D_{j}}^{C_{j}}\right).
\]
The flagged Weyl module $\mathcal{M}_{D}$  for  $D$ is  a subspace of $\mathbb{C} [Y]$ defined by
\begin{align*}	\mathcal{M}_{D}=\mathrm{Span}_{\mathbb{C}}\left\{\mathrm{det}\left(Y_D^C\right)\colon C\leq D\right\},
\end{align*}
which  is a $B$-module with the action inherited from the action of $B$ on $\mathbb{C} [Y]$.

The character of $\mathcal{M}_{D}$ is defined as
\[\mathrm{char}(\mathcal{M}_{D})(x_{1},\ldots, x_{n})=\mathrm{tr}(X\colon \mathcal{M}_{D}\rightarrow \mathcal{M}_{D}),\]
where $X$ is the diagonal matrix with diagonal entries $x_{1},\ldots,x_{n}$. Its  dual character is of the form
\begin{align*}
\chi_{D}(x):=\mathrm{char}(\mathcal{M}_{D})(x_{1}^{-1},\ldots,x_{n}^{-1}).
\end{align*}
It is readily checked that the polynomial $\mathrm{det}\left(Y_{D}^C\right)$  is an
eigenvector of $X$ with eigenvalue
\[\prod_{j=1}^n\prod_{i\in C_j}x_i^{-1}.\]
So the set of monomials appearing in $\chi_{D}(x)$ is exactly
\[\left\{x^C\colon C\le D\right\},\]
where
\[
x^{C}=\prod_{j=1}^{n}\prod_{i\in C_{j}} x_{i}.
\]
Specifically, the coefficient of a monomial $x^a$
appearing in $\chi_{D}(x)$ equals the dimension of the  eigenspace
\begin{align}\label{UUUP}
\mathrm{Span}_{\mathbb{C}}\left\{\mathrm{det}\left(Y_{D}^C\right)\colon  C\leq D, \, x^C=x^a\right\}.
\end{align}
By the above observations, we obtain
the coefficient-wise inequality
\begin{equation}\label{es}
\sum_{x^a\in \{x^C\colon C\leq D\}}x^a
\leq \chi_{D}(x)
\leq
\sum_{ C\leq D} x^C,
\end{equation}
which can be equivalently described as
\[
\# \{x^C\colon C\leq D\} \leq
\chi_{D}(1,\ldots,1)
\leq \# \{C\colon C\leq D\}.
\]

If the equality on left-hand  side of \eqref{es} holds, then we say that $\chi_{D}(x) $ attains the lower bound. In this case, $\chi_{D}(x) $ is called  zero-one or multiplicity-free, that is, the coefficient of $x^a$ in $\chi_{D}(x)$ is equal to either zero or one. As aforementioned, when $\chi_{D}(x)$ is a Schubert polynomial or a key polynomial,
the criterion of when $\chi_{D}(x)$ attains the lower bound was given respectively by Fink,  M{\'e}sz{\'a}ros  and  St. Dizier \cite{fink2021zero} and
  Hodges and Yong \cite{hodges2023multiplicity}.
 A conjectured criterion  by M{\'e}sz{\'a}ros,   St. Dizier  and   Tanjaya  for general $D$ \cite[Conjecture 27]{meszaros2021principal} is still open.

This paper focuses on the upper bound situation, namely, the case when the equality on the right-hand side of \eqref{es} holds.
Equivalently, for any monomial $x^a$ appearing in  $\chi_{D}(x)$, the polynomials generating the eigenspace in \eqref{UUUP} are all linearly independent.
When $\chi_{D}(x)$ is a Schubert polynomial or a key polynomial, the corresponding criterion was found  by
Fan and Guo\cite{fan2022upper} (more generally, for all northwest diagrams).
We shall prove a criterion of when $\chi_{D}(x)$ attains the upper bound for any given diagram $D$, as conjectured by
M{\'e}sz{\'a}ros,   St. Dizier  and   Tanjaya \cite[Conjecture 29]{meszaros2021principal}.

\begin{theo}[{\cite[Conjecture 29]{meszaros2021principal}}]\label{conj}
The dual character 	$\chi_{D}(x)$ attains the upper bound if and only if the diagram $D$ does not contain any subdiagram as shown in Figure  \ref{fig:enter-label-Y1}.
    \begin{figure}[ht]
        \centering
        \begin{equation*}\label{2.1}
		\begin{tikzpicture}
			\centering
			\draw[dashed, black] (-1.75,2) -- (1.75,2);
			\draw[dashed, black] (-1.75,1.5) -- (1.75,1.5);
			\draw[dashed, black] (-1.75,0) -- (1.75,0);
			\draw[dashed, black] (-1.75,0.5) -- (1.75,0.5);
			\draw[dashed, black] (-0.5,-0.5) -- (-0.5,2.5);
			\draw[dashed, black] (-1,-0.5) -- (-1,2.5);
			\draw[dashed, black] (1,-0.5) -- (1,2.5);
			\draw[dashed, black] (0.5,-0.5) -- (0.5,2.5);
			\draw[black] (-1,1.5) rectangle (-0.5,2);
			\draw[black] (0.5,1.5) rectangle (1,2);
			\draw[black] (-1,1.5) -- (-0.5,2);
			\draw[black] (0.5,1.5) -- (1,2);
			\draw[black] (-1,2) -- (-0.5,1.5);
			\draw[black] (0.5,2) -- (1,1.5);
            \node at (-2.25,1.75) {$i_1$};
            \node at (-2.25,0.25) {$i_2$};
            \node at (-0.75,2.85) {$j_1$};
            \node at (0.75,2.85) {$j_2$};
			\filldraw [lightgray] (-1,0) rectangle (-0.5,0.5);
			\draw[black] (-1,0) rectangle (-0.5,0.5);
			\filldraw [lightgray] (0.5,0) rectangle (1,0.5);
			\draw[black] (0.5,0) rectangle (1,0.5);
		\end{tikzpicture}
	\end{equation*}
        \caption{A subdiagram that $D$ should  avoid.}
        \label{fig:enter-label-Y1}
    \end{figure}
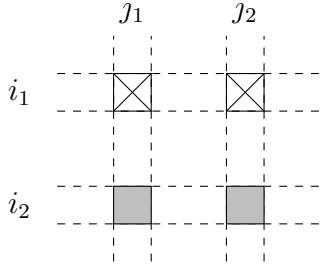
    In other words, there do not exist $1\leq i_1<i_2\leq n$ and $1\leq j_1<j_2\leq n$ such that the two boxes $(i_1, j_1)$ and $(i_1, j_2)$ do not belong to $D$, while the two boxes
$(i_2, j_1)$ and $(i_2, j_2)$  belong to $D$.
\end{theo}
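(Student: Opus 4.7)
The plan is to prove the two directions separately. For the forward direction (presence of the pattern implies failure of the upper bound), I will exhibit an explicit linear relation among the polynomials $\det(Y_D^C)$ for certain $C \le D$ of common weight. For the reverse direction, I will show that pattern-avoidance forces the spanning set $\{\det(Y_D^C):C \le D,\, x^C = x^a\}$ to be linearly independent for every weight $x^a$.

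Forward direction: suppose the pattern is realized at rows $i_1 < i_2$ and columns $j_1 < j_2$, so that $i_2 \in D_{j_1} \cap D_{j_2}$ and $i_1 \notin D_{j_1} \cup D_{j_2}$. Define $C$ to equal $D$ in every column except $j_1$, where I set $C_{j_1} = (D_{j_1} \setminus \{i_2\}) \cup \{i_1\}$; define $C'$ symmetrically by modifying column $j_2$ instead. One checks $C, C' \le D$ (using $i_1 < i_2$ and $i_1 \notin D_{j_1}, D_{j_2}$) and $x^C = x^{C'}$. In small or structurally simple cases (e.g., $D_{j_1} = D_{j_2}$), the identity $\det(Y_D^C) = \det(Y_D^{C'})$ holds outright, and its verification reduces to the two-column identity $\det(Y_{D_{j_1}}^{C_{j_1}}) \cdot \det(Y_{D_{j_2}}^{D_{j_2}}) = \det(Y_{D_{j_1}}^{D_{j_1}}) \cdot \det(Y_{D_{j_2}}^{C_{j_2}})$; this in turn can be established by cofactor expansion along the row indexed by $i_1$, using the block-triangular structure induced by $Y$'s upper-triangularity. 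In more intricate configurations (for instance when $D_{j_1}$ and $D_{j_2}$ differ significantly, and a third option where $i_1$ and $i_2$ both sit in column $j_2$ also contributes), a Pl\"ucker-type relation among three or more $\det(Y_D^C)$'s is required; this can be derived from Laplace expansion of a suitable $3\times 2$ (or larger) submatrix of $Y$.

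Reverse direction: assume $D$ avoids the pattern, and induct on the number of columns $n$. Removing column $n$ yields $D'=(D_1, \ldots, D_{n-1})$, which still avoids the pattern (being a sub-diagram). Every $\det(Y_D^C)$ factors as $\det(Y_{D'}^{(C_1, \ldots, C_{n-1})}) \cdot \det(Y_{D_n}^{C_n})$. By the inductive hypothesis the first factors are linearly independent within each weight class for $D'$; the inductive step must then show that multiplication by the column-$n$ factors preserves linear independence across the combined weight classes for $D$. Pattern-avoidance is used here to forbid the variables $y_{*,d}$ with $d \in D_n$ from producing unexpected collapses with polynomials supported in the first $n-1$ columns. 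A parallel direct combinatorial route is to assign to each $C \le D$ a canonical monomial $M(C)$ appearing with nonzero coefficient in $\det(Y_D^C)$ but vanishing in every $\det(Y_D^{C'})$ with $C' \ne C$ of the same weight; pattern-avoidance must then be invoked to guarantee the existence and uniqueness of such $M(C)$.

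The main obstacle is the reverse direction. The naive choice of distinguishing monomial, namely the identity-pairing product $\prod_{j,k} y_{c_{j,k}, d_{j,k}}$ obtained from the sorted pairings in each column, is insufficient: two distinct $C, C'$ of the same weight can share this identity monomial even when $D$ avoids the pattern (small examples confirm this, e.g., $D = (\{2,3\},\{1,3\})$). Hence any successful proof must exploit pattern-avoidance in a more refined way, either by blending identity and non-identity pairings across columns to define a cleverer $M(C)$, or by controlling the inductive combination step via a fine-grained analysis of how column $n$'s variables overlap with those of earlier columns. I expect this combinatorial analysis to constitute the bulk of the technical work.
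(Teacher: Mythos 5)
Your skeleton matches the paper's strategy --- a canonical-monomial/linear-independence argument when the pattern is avoided, and an explicit Laplace-type relation when it is present --- but in both halves the step you defer is the entire substance of the proof. For the pattern-avoiding direction, you correctly diagnose that the sorted (identity-pairing) monomial fails to separate diagrams of equal weight, but you do not produce a monomial that works, and you say so. The paper's choice is $y^{F_{\max}}$, where $F_{\max}$ is the greedy \emph{maximum-weight} flagged filling of $\mathcal{F}_D(C)$: fill each column top to bottom, always placing the largest available element of $C_j$ not exceeding the row index. Two nontrivial facts are then needed, neither of which your sketch supplies: (i) $y^{F_{\max}}$ survives the signed sum $\det(Y_D^C)=\sum_F \mathrm{sgn}(F)y^F$ --- cancellation is a real issue, and the paper excludes it by showing $F_{\max}$ is the unique filling of lexicographically maximal row-weight, so no other filling contributes the same monomial; and (ii) distinct $C$'s of equal weight yield distinct $y^{F_{\max}}$'s --- this is precisely where pattern avoidance is used: if two maximal fillings had equal row multisets but differed in row $r$, an entry $a<r$ occupying different columns $k,k'$ forces $(a,k),(a,k')\notin D$ (else greediness would have placed $a$ there) while $(r,k),(r,k')\in D$, i.e.\ the forbidden configuration. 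Your alternative column-by-column induction has no mechanism for the ``combination step'' and would, as far as I can see, end up rediscovering the same canonical monomial. In your own test case $D=(\{2,3\},\{1,3\})$ with $C=(\{1,3\},\{1,2\})$ and $C'=(\{1,2\},\{1,3\})$, the greedy rule puts $2$ (not $1$) in box $(2,1)$ for $C'$, giving $y_{11}y_{13}y_{22}y_{33}$ versus $y_{11}y_{12}y_{23}y_{33}$, which is exactly the separation the sorted pairing misses.

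For the direction where the pattern occurs, the two-term identity $\det(Y_{D_{j_1}}^{C_{j_1}})\det(Y_{D_{j_2}}^{D_{j_2}})=\det(Y_{D_{j_1}}^{D_{j_1}})\det(Y_{D_{j_2}}^{C_{j_2}})$ you lean on fails outside the special cases you name: with $i_1=1$, $i_2=3$, $D_{j_1}=\{3\}$, $D_{j_2}=\{2,3\}$ the left side is $y_{13}y_{22}y_{33}$ while the right side is $y_{33}(y_{12}y_{23}-y_{13}y_{22})$. You concede that a longer Pl\"ucker-type relation is then required but do not construct it, and the construction needs an idea absent from your proposal: first pass to a \emph{minimal} occurrence of the pattern, so that for each row strictly between $i_1$ and $i_2$ exactly one of the two columns has a box; then the middle segments $D_{j_1}^{(2)}$, $D_{j_2}^{(2)}$ meet only in $\{i_2\}$ and union to $\{i_1+1,\dots,i_2\}$, and Laplace expansion of a single square matrix with a repeated column (hence zero determinant) along the columns indexed by $D_{j_1}^{(2)}$ yields a vanishing combination of the products $\det\bigl(Y_{D_{j_1}^{(2)}}^{R}\bigr)\det\bigl(Y_{D_{j_2}^{(2)}}^{\overline{R}}\bigr)$, which lifts to a dependence among the $\det(Y_D^C)$. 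Without the minimality reduction there is no single matrix whose expansion produces the relation. In short: the architecture is right, but both key lemmas are missing, so the proposal does not constitute a proof.
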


\begin{re}
 Huh,   Matherne,   M{\'e}sz{\'a}ros  and   St. Dizier \cite{Huh} asked if  the normalized dual character Weyl character $\mathrm{N}(\chi_{D}(x))$ is Lorentzian, where $\mathrm{N}$ is a linear operator sending a monomial $x^a=x^{a_1}\cdots x^{a_n}$ to
 \[
 \mathrm{N}(x^a)=\frac{x^a}{a_1!\cdots a_n!}.
 \]
 Invoking   Theorem \ref{conj} and using  the same arguments as in the proof of \cite[Proposition 17]{Huh}, we can prove that if $D$ avoids the configuration  in Figure  \ref{fig:enter-label-Y1}, then
 $\chi_{D}(x)$  is Lorentzian, and thus $\mathrm{N}(\chi_{D}(x))$ is Lorentzian.
 Here we used the fact that  the Lorentzian
property of a polynomial $ f(x)$   implies that of  $\mathrm{N}(f(x))$ \cite[Corollary 3.7]{BH}.

\end{re}

\section{Sufficiency of Theorem \ref{conj}}

In this section, prove the sufficiency of Theorem \ref{conj}. Let us begin with establishing a combinatorial model generating the polynomial $\mathrm{det}\left(Y_{D}^C\right)$ for any $C\leq D$.
A  {\it flagged filling} of a diagram  $D=(D_1,\ldots, D_n)$   is a filling of the boxes of $D$ with positive integers   such that
\begin{itemize}
		\item[(1)] each box of $D$ receives exactly one integer, and the entries in each column are distinct;
		
  \item[(2)] the entry in the box $(i,j)$ cannot exceed $i$.
\end{itemize}
	
\begin{re}
For a flagged filling  $F\in \mathcal{F}_{D}$,  if letting $C=(C_1,\ldots, C_n)$ be the diagram with $C_j$ being the set of entries in the $j$-th column of $F$, then it is not hard to check that  $C\leq D$.
\end{re}

Let $\mathcal{F}_{D}$ denote the set of all flagged fillings of $D$. For   $F\in \mathcal{F}_{D}$, define the inversion number $\mathrm{inv}(F)$ of $F$ as follows.
Assume that  $F_j$ is $j$-th column of   $F$. Let $a=a_1\cdots a_m$ be the word obtained by reading the entries of $F_j$ from top to bottom. Define  $\mathrm{inv}(F_i)$ to be  the inversion number of $a$,
namely,
\[
\mathrm{inv}(F_i)=\#\{(r, s)\colon 1\leq r<s\leq m, \ a_r>a_s\}.
\]
Set
\[
\mathrm{inv}(F)=\mathrm{inv}(F_1)+\cdots +\mathrm{inv}(F_n).
\]
For example, consider the flagged filling depicted  in Figure \ref{fig:enter-label-UU22}.
\begin{figure}[ht]
    \centering
    \begin{align*}
	F^{}=\begin{tikzpicture}[baseline={([yshift=-.5ex]current bounding box.center)}]
		\centering
		\draw[dashed,step=0.5,help lines] (-1,-1) grid (1,1);
		\draw[black,step=0.5] (-1,-1) grid (-0.5,0.5);
        \draw[black,step=0.5] (-0.5,0) rectangle (0,0.5);
        \draw[black,step=0.5] (-0.5,0) rectangle (0,-0.5);
        \draw[black,step=0.5] (-0.5,-1) rectangle (0,-0.5);
		\draw[black,step=0.5] (0,1) grid (0.5,0.5);
		\draw[black,step=0.5] (0.5,-1) grid (1,0);
		\node at (-0.75,-0.75) {1};
		\node at (-0.75,-0.25) {3};
		\node at (-0.75,0.25) {2};
		\node at (0.25,0.75) {1};
		\node at (0.75,-0.25) {3};
         \node at (0.75,-0.75) {2};
        \node at (-0.25,-0.75) {3};
		\node at (-0.25,-0.25) {1};
		\node at (-0.25,0.25) {2};
	\end{tikzpicture}
\end{align*}
    \caption{A flagged filling.}
    \label{fig:enter-label-UU22}
\end{figure}
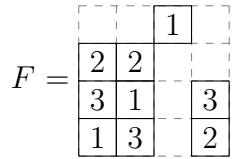
The column reading words are $231, 213, 1, 32$. So we have
\[
\mathrm{inv}(F)=2+1+0+1=4.
\]
For  $F\in \mathcal{F}_{D}$, we use    $y^{F}$ to denote the  following monomial in  $\mathbb{C} [Y]$ determined by $F$:
	\begin{align*}
		y^{F}=\prod_{(i, j)\in D}y_{c_{ij}i},
	\end{align*}
 where $c_{ij}$ is the entry in the box $(i, j)$ of $F$. The flagged  filling in   Figure \ref{fig:enter-label-UU22} gives
 \[
y^{F}=y_{11}\,y_{22}^2\,y_{13}\,y_{33}^2\,y_{14}\,y_{24}\,y_{34}.
 \]

 For   $C\leq D$,
 let $\mathcal{F}_{D}(C)$ denote the subset of  $\mathcal{F}_{D}$ consisting of the flagged fillings $F\in \mathcal{F}_{D}$  such that for $1\leq j\leq n$,  the set of entries in the $j$-th column of $F$ is exactly $C_j$.

 \begin{lem}\label{ABC-1}
For $C\leq D$,  we have
	\begin{align}\label{OOPP}	\mathrm{det}\left(Y_{D}^C\right)=\prod_{j=1}^{n}\mathrm{det}\left(Y_{D_{j}}^{C_{j}}\right)=\sum_{F\in \mathcal{F}_{D}(C)}\mathrm{sgn}(F)\cdot y^{F},
	\end{align}
	where $\mathrm{sgn}(F)=(-1)^{\mathrm{inv}(F)}$.
\end{lem}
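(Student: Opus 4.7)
The plan is to unfold the right-hand side of \eqref{OOPP} column by column via the Leibniz expansion of each $\det(Y_{D_j}^{C_j})$ and match the resulting nonvanishing terms with flagged fillings in $\mathcal{F}_D(C)$. The first equality in \eqref{OOPP} is by definition. Fix $C\le D$ and write $D_j=\{d_1^{(j)}<\cdots<d_{m_j}^{(j)}\}$, $C_j=\{c_1^{(j)}<\cdots<c_{m_j}^{(j)}\}$. I would apply the Leibniz formula to obtain
\[
\det\!\left(Y_{D_j}^{C_j}\right)=\sum_{\sigma_j\in S_{m_j}}\mathrm{sgn}(\sigma_j)\prod_{r=1}^{m_j} y_{c_r^{(j)},\,d_{\sigma_j(r)}^{(j)}},
\]
and then invoke the upper-triangular structure of $Y$ to observe that a term vanishes unless $c_r^{(j)}\le d_{\sigma_j(r)}^{(j)}$ for every $r$.

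Taking the product over $j$ turns the right-hand side of \eqref{OOPP} into a sum indexed by tuples $(\sigma_1,\ldots,\sigma_n)$ of permutations. Next I would set up a bijection between such surviving tuples and elements of $\mathcal{F}_D(C)$: a tuple determines the filling $F$ obtained by placing the entry $c_r^{(j)}$ in the box $(d_{\sigma_j(r)}^{(j)},\,j)$. The fact that each $\sigma_j$ is a bijection gives distinctness of entries in each column and shows that the set of column-$j$ entries is exactly $C_j$, so $F\in\mathcal{F}_D(C)$; the nonvanishing condition $c_r^{(j)}\le d_{\sigma_j(r)}^{(j)}$ is precisely the flag condition that the entry in box $(i,j)$ does not exceed $i$. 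Conversely, any $F\in\mathcal{F}_D(C)$ arises from a unique such tuple.

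To finish, one identifies the monomial contribution $\prod_r y_{c_r^{(j)},d_{\sigma_j(r)}^{(j)}}$ from column $j$ with the column-$j$ part of $y^F$, which follows immediately by matching box by box: the factor associated with the box $(i,j)=(d_{\sigma_j(r)}^{(j)},j)$ is $y_{c_{ij},i}$ with $c_{ij}=c_r^{(j)}$. The remaining point is the sign: for the filling $F$ associated to $(\sigma_1,\ldots,\sigma_n)$, the column reading word of $F_j$ is $c_{\sigma_j^{-1}(1)}^{(j)},\ldots,c_{\sigma_j^{-1}(m_j)}^{(j)}$, and since $c_1^{(j)}<\cdots<c_{m_j}^{(j)}$ the inversion count of this word equals that of $\sigma_j^{-1}$, which in turn equals $\mathrm{inv}(\sigma_j)$. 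Hence $\mathrm{sgn}(\sigma_j)=(-1)^{\mathrm{inv}(F_j)}$, and multiplying over $j$ gives $\prod_j\mathrm{sgn}(\sigma_j)=\mathrm{sgn}(F)$, completing the identification.

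The main mildly delicate point is this last sign bookkeeping; the rest is a direct unravelling of definitions. The hard part, to the extent there is one, is to keep clear the dual roles played by a permutation $\sigma_j$ (telling which element of $C_j$ occupies which box of column $j$ of $D$) and its inverse (appearing when the filling is read top-to-bottom), but this is resolved by the standard invariance of $\mathrm{inv}$ under taking inverses in the symmetric group.
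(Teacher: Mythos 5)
Your proposal is correct and follows essentially the same route as the paper: factor $\det(Y_D^C)$ over columns, expand each $\det(Y_{D_j}^{C_j})$ by the Leibniz formula, use upper-triangularity to identify the surviving terms with flagged fillings, and match signs via the inversion number of the column reading word. The paper is terser on the sign bookkeeping (it simply identifies $(-1)^{\mathrm{inv}(w)}$ with $\mathrm{sgn}(F_j)$ without the $\mathrm{inv}(\sigma^{-1})=\mathrm{inv}(\sigma)$ remark), but your argument fills in exactly that detail and is otherwise the same.
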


\begin{proof}
Let $\mathcal{F}_{D_j}(C_j)$ denote the set of flagged fillings of $D_j$ with the numbers in $C_j$. Clearly,
\[
\mathcal{F}_{D}(C)=\mathcal{F}_{D_1}(C_1)\times \cdots \times \mathcal{F}_{D_n}(C_n).
\]
To verify 	\eqref{OOPP}, it suffices to check that
\begin{align}\label{OOPPQQ}
\mathrm{det}\left(Y_{D_{j}}^{C_{j}}\right)=\sum_{F_j\in \mathcal{F}_{D_j}(C_j)}\mathrm{sgn}(F_j)\cdot y^{F_j}.
	\end{align}
To see this, let $C_j=\{c_1<\cdots<c_m\}$ and $D_j=\{d_1<\cdots<d_m\}$. Then the nonzero momomials appearing  in the expansion of   $\mathrm{det}\left(Y_{D_{j}}^{C_{j}}\right)$ are
\[
(-1)^{\mathrm{inv}(w)}y_{w(c_1)d_1}\cdots y_{w(c_m)d_m},
\]
where $w$ can be taken as any permutation of $C=\{c_1,\ldots, c_m\}$ such  that $w(c_t)\leq d_t$ for $t=1,\ldots, m$.
Each such $w$ defines a flagged filling of $D_j$ by placing $w(c_t)$ to the box of $D_j$ in row $d_t$, and vice versa. This implies   \eqref{OOPPQQ}, as desired.
\end{proof}

\begin{re}
It should be noticed   that  cancellations possibly occur in the summation \eqref{OOPP}. This means that   there may exist fillings $F^{(1)},F^{(2)}\in \mathcal{F}_{D}(C)$ that contribute the same monomial, but have opposite signs.
See an example  illustrated in Figure  \ref{fig:enter-label-PUYTT}.
\begin{figure}[ht]
    \centering
    \begin{align*}
	\begin{tikzpicture}[baseline={([yshift=-.5ex]current bounding box.center)}]
		\centering
		\draw[dashed,step=0.5,help lines] (-2-5,-2) grid (1-5,1);
		\draw[black,step=0.5] (-2-5,-1) grid (-1-5,0);
		\draw[black,step=0.5] (-2-5,-2) grid (-1-5,-1.5);
		\draw[black,step=0.5] (0-5,-2) grid (0.5-5,-1.5);
		\draw[black,step=0.5] (0-5,-1) grid (0.5-5,0);
		\node at (-1.75-5,-1.75) {3};
		\node at (-1.75-5,-0.75) {2};
		\node at (-1.75-5,-0.25) {1};
		\node at (-1.25-5,-1.75) {1};
		\node at (-1.25-5,-0.75) {3};
		\node at (-1.25-5,-0.25) {2};
		\node at (0.25-5,-1.75) {2};
		\node at (0.25-5,-0.75) {1};
		\node at (0.25-5,-0.25) {3};
        \node at (-7.7,-0.5) {$F^{(1)}=$};
        \node at (-2.7,-0.5) {$F^{(2)}=$};
  	\draw[dashed,step=0.5,help lines] (-2,-2) grid (1,1);
		\draw[black,step=0.5] (-2,-1) grid (-1,0);
		\draw[black,step=0.5] (-2,-2) grid (-1,-1.5);
		\draw[black,step=0.5] (0,-2) grid (0.5,-1.5);
		\draw[black,step=0.5] (0,-1) grid (0.5,0);
        \node at (-1.75,-1.75) {1};
		\node at (-1.75,-0.75) {2};
		\node at (-1.75,-0.25) {3};
		\node at (-1.25,-1.75) {3};
		\node at (-1.25,-0.75) {1};
		\node at (-1.25,-0.25) {2};
		\node at (0.25,-1.75) {2};
		\node at (0.25,-0.75) {3};
		\node at (0.25,-0.25) {1};
	\end{tikzpicture}
\end{align*}
    \caption{Two fillings such that $y^{F^{(1)}}=y^{F^{(2)}}$ and $\mathrm{sgn}(F^{(1)})=-\mathrm{sgn}(F^{(2)})$.}
    \label{fig:enter-label-PUYTT}
\end{figure}
\end{re}

For   $F\in \mathcal{F}_{D}$, we define the weight of $F$ to be a vector
\begin{align*}
		\mathrm{wt}(F)=(f_{1},\ldots,f_{n}),
	\end{align*}	
where $f_{i}$ is the total sum of entries in the $i$-th row of $F$.
Recall that for two vectors $f=(f_1,\ldots, f_n)$ and $f'=(f_1',\ldots, f_n')$ of nonnegative integers, $f> f'$ in the lexicographical order if there exists an $1\leq i\leq n$ such that $f_i>f_i'$ and $f_t=f_t'$ for $1\leq t<i$.

\begin{lem}\label{ABC-2}
	For   $C\leq D$, there is a unique filling, denoted  $F_{\mathrm{max}}$, in
 $\mathcal{F}_{D}(C)$ with the maximum weight, that is,  $\mathrm{wt}(F_{\mathrm{max}})>\mathrm{wt}(F)$ for any $F\in \mathcal{F}_{D}(C)$ with $F\neq F_{\mathrm{max}}$.
\end{lem}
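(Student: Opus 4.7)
The plan is to exhibit $F_{\mathrm{max}}$ explicitly by a column-wise greedy construction and then verify via a direct exchange argument that every other element of $\mathcal{F}_{D}(C)$ has strictly smaller weight in lex order.

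First, I would use that $\mathcal{F}_{D}(C)=\mathcal{F}_{D_1}(C_1)\times\cdots\times\mathcal{F}_{D_n}(C_n)$, so a filling is specified column by column. Fix a column $j$, write $D_j=\{d_1<\cdots<d_m\}$ and $C_j=\{c_1<\cdots<c_m\}$, and define $F_{\mathrm{max}}$ in column $j$ by sweeping its boxes in order of increasing row index: in the box $(d_t,j)$ place the largest element of $C_j$ not yet used that is at most $d_t$. A short induction on $t$, using the Gale order $C_j\leq D_j$, shows that such a choice always exists and that after removing it the Gale order still holds between the remaining subsets of $C_j$ and $D_j$; hence the greedy procedure produces a bona fide flagged filling.

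For uniqueness and maximality, pick any $F\in\mathcal{F}_{D}(C)$ distinct from $F_{\mathrm{max}}$ and let $i^{*}$ be the smallest row index for which there exists a column $j$ where $F$ and $F_{\mathrm{max}}$ assign different entries to the box $(i^{*},j)$. Then in every column $j$, the entries of $F$ and $F_{\mathrm{max}}$ in all rows of index less than $i^{*}$ coincide, so at row $i^{*}$ the two fillings see the same set of ``available'' elements of $C_j$; by construction $F_{\mathrm{max}}$ assigns the largest such element that is at most $i^{*}$, while $F$ assigns some element of the same set bounded by $i^{*}$, necessarily strictly smaller in every column where the two fillings differ. Summing over columns yields $f_{i^{*}}(F_{\mathrm{max}})>f_{i^{*}}(F)$, while $f_k(F_{\mathrm{max}})=f_k(F)$ for every $k<i^{*}$; hence $\mathrm{wt}(F_{\mathrm{max}})>\mathrm{wt}(F)$ lexicographically.

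The main obstacle is conceptual rather than technical: one must spot the correct greedy rule (``choose the largest available entry $\leq d_t$'') and notice that it is compatible with the lex order on weights because smaller row indices occupy more significant positions in the weight vector. Once this is observed, both the well-definedness of $F_{\mathrm{max}}$ and the strict lex comparison above are routine consequences of the Gale order hypothesis.
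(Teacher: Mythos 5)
Your proposal is correct and follows essentially the same route as the paper: the identical column-wise greedy construction (largest available entry not exceeding the row index), followed by a row-by-row comparison showing that any other filling agrees on the initial rows and is strictly smaller in the first row where it deviates. Your write-up is if anything slightly more explicit than the paper's, in that you verify the greedy choice preserves the Gale order and you isolate the minimal differing row $i^{*}$ directly, but the underlying argument is the same.
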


\begin{proof}
The proof is constructive. For $1\leq j\leq n$, the $j$-th column of $F_{\mathrm{max}}$ is defined as follows. Assume that $D_j=\{d_1<\cdots<d_m\}$. Fill the boxes of $D_j$ from top to bottom with the numbers in $C_j$ such that each box is assigned with a number as large as possible. Precisely,   the first box $(d_1, j)$ is filled with  the largest element, say $c_1$, of $C_j$
not exceeding $d_1$.
We obey the same rule to fill the second box $(d_2, j)$ with the numbers in $C_j\setminus \{c_1\}$.
Continuing the above procedure, we eventually obtain a flagged   filling in  $\mathcal{F}_{D}(C)$, which is defined as $F_{\mathrm{max}}$.
 See Figure \ref{fig:enter-label-PUYT} for an illustration of the construction.

\begin{figure}[ht]
    \centering
    \begin{align*}
	\begin{tikzpicture}[baseline={([yshift=-.5ex]current bounding box.center)}]
        \centering
  	\draw[dashed,step=0.5,help lines] (-2,-2) grid (1,1);
		\draw[black,step=0.5] (-2,-1.5) grid (-1.5,-1);
        \node at(-1.75,-1.25) {2};
        \draw[black,step=0.5] (-2,-0.5) grid (-1.5,0);
        \node at(-1.75,-0.25) {3};
        \draw[black,step=0.5] (-1.5,-1) grid (-1,-0.5);
        \node at(-1.25,-0.75) {3};
        \draw[black,step=0.5] (-1.5,0.5) grid (-1,1);
        \node at(-1.25,0.75) {1};
        \draw[black,step=0.5] (-1,0.5) grid (-0.5,-0.5);
        \node at(-0.75,-0.25) {1};
        \node at(-0.75,0.25) {2};
        \draw[black,step=0.5] (-1,-2) grid (-0.5,-1.5);
        \node at(-0.75,-1.75) {5};
        \draw[black,step=0.5] (-0.5,-2) grid (0,-0.5);
        \node at(-0.25,-1.75) {1};
        \node at(-0.25,-1.25) {3};
        \node at(-0.25,-0.75) {4};
        \draw[black,step=0.5] (-0.5,0) grid (0,0.5);
        \node at(-0.25,0.25) {2};
        \draw[black,step=0.5] (0,-1) grid (0.5,0);
        \node at(0.25,-0.75) {2};
        \node at(0.25,-0.25) {3};
        \draw[black,step=0.5] (0.5,-1.5) grid (1,-1);
        \node at(0.75,0.25) {1};
        \node at(0.75,-1.25) {4};
        \draw[black,step=0.5] (0.5,0) grid (1,0.5);
	\end{tikzpicture}
\end{align*}
    \caption{A flagged  filling with maximum weight.}    \label{fig:enter-label-PUYT}
\end{figure}

Suppose that $\mathrm{wt}(F_{\mathrm{max}})=(f_{1},\ldots,f_{n})$. Let $F$ be a filling in $\mathcal{F}_{D}(C)$ that has the same weight as $F_{\mathrm{max}}$. We need to check that $F=F_{\mathrm{max}}$.
Let us first consider the first row of $F$ (Without loss of generality, assume that the first row of $F$ is nonempty).
Let $(1,j)$ be any box of $D$ in the first row. By the construction of $F_{\mathrm{max}}$, the entry of $F$ in $(1,j)$   cannot exceed the entry of $F_{\mathrm{max}}$ in $(1,j)$. Since the sum of entries in the first row of $F$ is $f_1$,  $F$ and $F_{\mathrm{max}}$ must have the same entry in $(1,j)$.  The same analysis applies to the second row, and eventually we may conclude  that $F=F_{\mathrm{max}}$.
\end{proof}

We are now ready to prove the sufficiency of Theorem \ref{conj}.

\begin{theo}
Fix a diagram $D$. Let $x^a$ be any monomial appearing in
$\chi_D(x)$. If $D$ avoids the configuration as shown in
Figure \ref{2.1}, then the set
\begin{align*}
\left\{\mathrm{det}\left(Y_{D}^C\right)\colon  C\leq D, \, x^C=x^a\right\}
\end{align*}
of polynomials generating the eigenspace corresponding to $x^a$ is linearly independent.
\end{theo}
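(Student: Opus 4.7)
The plan is a leading-monomial argument built around the distinguished filling $F_{\max}(C)$ from Lemma~\ref{ABC-2}. I will order the $y$-monomials lexicographically by the weight vector $\mathrm{wt}(F)$. Since $y^F$ records, for each row $i$, the multiset of entries of $F$ in that row (and hence determines $\mathrm{wt}(F)$), combining Lemma~\ref{ABC-1} with the uniqueness statement in Lemma~\ref{ABC-2} shows that the monomial $y^{F_{\max}(C)}$ appears in $\det(Y_D^C)$ with coefficient $\mathrm{sgn}(F_{\max}(C))=\pm 1$, and with strictly larger weight than every other monomial of $\det(Y_D^C)$. In other words, $y^{F_{\max}(C)}$ is the unique leading monomial of $\det(Y_D^C)$ under this order.

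Given this, I will reduce the asserted linear independence to the injectivity of the map $C\mapsto y^{F_{\max}(C)}$ on $\{C\leq D:x^C=x^a\}$. Indeed, in a putative linear dependence $\sum_{C}\alpha_C\det(Y_D^C)=0$, I will pick $C^*$ in the support that maximizes $y^{F_{\max}(C)}$ under a total order refining weight-lex: for every other $C'$ in the support, $y^{F_{\max}(C^*)}$ either exceeds the maximum weight occurring in $\det(Y_D^{C'})$ or, in case of a tie, differs from $y^{F_{\max}(C')}$ by injectivity, so $y^{F_{\max}(C^*)}$ does not appear in $\det(Y_D^{C'})$. This forces $\alpha_{C^*}=0$, and induction finishes. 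Establishing the injectivity under the avoidance hypothesis on $D$ is the main obstacle.

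To prove injectivity, I will argue by contradiction. Suppose $y^{F_{\max}(C)}=y^{F_{\max}(C')}$ with $C\neq C'$; set $F=F_{\max}(C)$ and $F'=F_{\max}(C')$, and for a filling $G$ write $G(i,j)$ for its entry at $(i,j)\in D$. Let $i^*$ be the smallest row on which $F$ and $F'$ disagree; the fillings coincide on rows $<i^*$, and the row-multisets agree in every row. Let $b^*$ be the minimum value occurring as the entry of $F$ or $F'$ at any mismatched box of row $i^*$; since at such a box the two entries are distinct and both $\leq i^*$, we have $b^*<i^*$. The essential property of the greedy rule defining $F_{\max}$ is: \emph{if $c\in C_j$ and $(c,j)\in D$, then $F_{\max}(C)(c,j)=c$}; contrapositively, $F_{\max}(C)(i,j)=c$ with $c<i$ forces $(c,j)\notin D$. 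Choosing, after possibly swapping $C$ and $C'$, a mismatched column $j_1$ with $F(i^*,j_1)=b^*$ immediately yields $(b^*,j_1)\notin D$. Since $b^*$ appears in the row-multiset at $i^*$, some $j_2\neq j_1$ satisfies $F'(i^*,j_2)=b^*$ (the inequality $j_2\neq j_1$ holds since $F'(i^*,j_1)\neq b^*$ at the mismatched column $j_1$); the same contrapositive applied to $F'$ then gives $(b^*,j_2)\notin D$. After relabeling so that $j_1<j_2$, the configuration $(b^*,j_1),(b^*,j_2)\notin D$ together with $(i^*,j_1),(i^*,j_2)\in D$ and $b^*<i^*$ is exactly the forbidden pattern of Figure~\ref{fig:enter-label-Y1}, contradicting the hypothesis on $D$.
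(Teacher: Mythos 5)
Your proposal is correct and takes essentially the same route as the paper: the same greedy maximal-weight filling $F_{\max}(C)$ supplies a cancellation-free leading monomial of $\det\left(Y_D^C\right)$ under the weight order, linear independence is reduced to injectivity of $C\mapsto y^{F_{\max}(C)}$, and that injectivity is proved by the paper's own argument, namely extracting the forbidden configuration from an entry $b^*<i^*$ that occupies different columns in the two fillings via the observation that a greedily placed entry $c$ in row $i>c$ forces $(c,j)\notin D$. The only (harmless) difference is that you select the minimal disagreeing row and minimal such entry, whereas the paper works with an arbitrary disagreeing row.
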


\begin{proof}

Suppose that $C^{(1)},\ldots,C^{(m)}$ are all the diagrams $C\leq D$ such that $x^{C}=x^a$.
For $1\leq i\leq m$, let $F_{\mathrm{max}}^{(i)}$ be the flagged filling in $\mathcal{F}_D(C^{(i)})$ with the maximum weight. We further assume that the weights of $\mathcal{F}_D(C^{(i)})$ are weakly decreasing, namely,
\begin{equation}\label{UYTR}
\mathrm{wt}(\mathcal{F}_D(C^{(1)}))\geq  \cdots \geq
\mathrm{wt}(\mathcal{F}_D(C^{(m)})).
\end{equation}
For simplicity, we denote
\[
\textbf{ y}^{(i)}=y^{F_{\mathrm{max}}^{(i)}}.
\]

\noindent
{\textbf{Claim 1.}}
For $1\leq i\leq m$, the monomial $\textbf{ y}^{(i)}$ appears in the polynomial
$\mathrm{det}(Y_D^{C^{(i)}})$.

To conclude this claim, by Lemma \ref{ABC-1}, it suffices  to verify $y^F\neq \textbf{ y}^{(i)}$  for any $F\in \mathcal{F}_D(C^{(i)})$
with $F\neq F_{\mathrm{max}}^{(i)}$. This follows from
 Lemma \ref{ABC-2} together with the fact that
if  $y^F=y^{F'}$  for $F, F'\in \mathcal{F}_D(C^{(i)})$, then $\mathrm{wt}(F)=\mathrm{wt}(F')$.

The second  claim plays a key role in the proof of the theorem, whose proof will use the subdiagram avoidance condition given in Figure \ref{2.1}.

\noindent
{\textbf{Claim 2.}} The monomials $\textbf{ y}^{(1)}, \ldots, \textbf{ y}^{(m)}$ are distinct.

We prove this claim by contradiction. Suppose to the contrary that there exist $1\leq i<j\leq m$ such that
$\textbf{ y}^{(i)}=\textbf{ y}^{(j)}$. Then, for any $1\leq r\leq n$, the multi-set of  entries in the $r$-th row of $F_{\mathrm{max}}^{(i)}$ is the same as the multi-set of  entries in the $r$-th row of $F_{\mathrm{max}}^{(j)}$.
(It should be noted  that $F_{\mathrm{max}}^{(i)}\neq F_{\mathrm{max}}^{(j)}$ since otherwise we would have $C^{(i)}=C^{(j)}$.) Thus $F_{\mathrm{max}}^{(i)}$ can be obtained from $F_{\mathrm{max}}^{(j)}$ by rearranging the entries in each row.

Locate any index  $1\leq r\leq n$ such that the $r$-th row  $R_r$  of $F_{\mathrm{max}}^{(i)}$ is different from the $r$-th row $R_r'$ of $F_{\mathrm{max}}^{(j)}$. Keep in mind that each entry in the $r$-th row cannot exceed $r$, and that $R'$ can be obtained  by rearranging the entries of $R'$.
There must exist  an entry, say $a$, such that $a<r$ and $a$ lies in distinct columns of $R_r$ and $R_r'$.  Suppose that $a$ lies the $k$-th (resp., $k'$-th) column of $R_r$ (resp., $R_r'$). Below is a crucial observation.
\begin{itemize}
\item Both the box $(a, k)$ and  the box $(a, k')$ do not belong to $D$.
\end{itemize}
This can be seen as follows. Suppose otherwise that $(a, k)$ (resp., $(a, k')$) belongs to $D$. Then, by the construction of the
$k$-th (resp., $k'$-th) column of   $F_{\mathrm{max}}^{(i)}$ (resp., $F_{\mathrm{max}}^{(j)}$), the box $(a, k)$  (resp., $(a, k')$)  in $F_{\mathrm{max}}^{(i)}$  (resp., $F_{\mathrm{max}}^{(j)}$)
would  be filled with $a$, leading to a contradiction.

The above observation yields that the boxes $(r, k)$, $(r, k')$, $(a, k)$, $(a, k')$ form  a configuration as shown in Figure \ref{2.1}. Hence the assumption that $\textbf{ y}^{(i)}=\textbf{ y}^{(j)}$ is false. This finishes the proof of Claim 2.

We use Claims 1 and 2 to show that
$\mathrm{det}\left(Y_{D}^{C^{(1)}}\right), \ldots, \mathrm{det}\left(Y_{D}^{C^{(m)}}\right)$
are linearly independent. The idea  is similar to that of \cite[Thm 2.5.8]{2001The}. Assume that we have the following linear relation
\begin{align*}
		c_1\mathrm{det}(Y_{D}^{C^{(1)}})+c_2\mathrm{det}(Y_{D}^{C^{(2)}})+\cdots+c_n\mathrm{det}(Y_{D}^{C^{(m)}})=0.
\end{align*}
Combining  the assumption in \eqref{UYTR}, Claim 1 (together with its proof) and Claim 2, we see that the nomomial $\textbf{ y}^{(1)}$ only appears in $\mathrm{det}(Y_{D}^{C^{(1)}})$.
This leads to $c_1=0$. Using the same analysis, we conclude $c_i=0$ for $i=2,\ldots, m$. This completes the proof.
\end{proof}

\section{Necessity of Theorem  \ref{conj}}

The following theorem confirms the necessity of Theorem  \ref{conj}.

\begin{theo}
If the diagram $D$ has the configuration shown in Figure \ref{2.1}, then   the set
\begin{align*}
\left\{\mathrm{det}\left(Y_{D}^C\right)\colon  C\leq D\right\}
\end{align*}
of polynomials generating the module $\mathcal{M}_{D}$ is linearly dependent.
\end{theo}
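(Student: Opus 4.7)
The plan is to exhibit an explicit nontrivial linear relation among the generators $\mathrm{det}(Y_D^C)$ for $C\leq D$, produced by a Pl\"ucker-type identity that exploits the fact that the row index $i_2$ lies in both $D_{j_1}$ and $D_{j_2}$.

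First, I would reduce to the two columns $j_1,j_2$: fix $C_k = D_k$ for each $k\notin\{j_1,j_2\}$, so that the factor $\mathrm{det}(Y_{D_k}^{D_k}) = \prod_{r\in D_k} y_{rr}$ is a nonzero monomial. It then suffices to produce a nontrivial linear dependence among the products
\[
P(A,B) := \mathrm{det}(Y_{D_{j_1}}^A)\cdot\mathrm{det}(Y_{D_{j_2}}^B),
\]
for valid pairs $A\leq D_{j_1}$, $B\leq D_{j_2}$, since multiplying by $\prod_{k\neq j_1,j_2}\mathrm{det}(Y_{D_k}^{D_k})$ transports such a relation back to the $\mathrm{det}(Y_D^C)$'s.

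Next, let $Z$ be the $n\times(|D_{j_1}|+|D_{j_2}|)$ matrix formed by concatenating the columns of $Y$ indexed by $D_{j_1}$ followed by those indexed by $D_{j_2}$. Because $i_2\in D_{j_1}\cap D_{j_2}$, the column of $Y$ indexed by $i_2$ appears twice in $Z$. Hence for any subset $T\subseteq[n]$ with $|T| = |D_{j_1}|+|D_{j_2}|$, the square submatrix $Z[T,*]$ has two identical columns and thus vanishing determinant. Laplace expansion splitting the columns of $Z$ into the $D_{j_1}$-block and the $D_{j_2}$-block yields the identity
\[
\sum_{A\subseteq T,\;|A|=|D_{j_1}|} (-1)^{\sigma(A,T)}\, P(A,T\setminus A) \;=\; 0,
\]
in which only the summands with $A\leq D_{j_1}$ and $T\setminus A\leq D_{j_2}$ are nonzero.

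The remaining task is to choose $T$ so that at least two valid partitions survive, making the relation genuinely nontrivial. In the base case $D_{j_1}\cap D_{j_2}=\{i_2\}$ I would take $T = D_{j_1}\cup D_{j_2}\cup\{i_1\}$, which has size exactly $|D_{j_1}|+|D_{j_2}|$ because $i_1\notin D_{j_1}\cup D_{j_2}$ by the forbidden-configuration hypothesis. Two valid partitions are then $(A,B) = ((D_{j_1}\setminus\{i_2\})\cup\{i_1\},\,D_{j_2})$ and $(A,B) = (D_{j_1},\,(D_{j_2}\setminus\{i_2\})\cup\{i_1\})$, both satisfying the requisite Gale inequalities since $i_1<i_2$, which produces at least two distinct nonzero terms. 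When $|D_{j_1}\cap D_{j_2}|>1$, I would augment $T$ with small auxiliary row indices chosen to preserve at least two valid partitions, or, in the extremal case $D_{j_1}=D_{j_2}$, invoke the direct swap identity $P(A,B) = P(B,A)$ for $A\neq B$, which is itself a two-term dependence. I expect the main obstacle to be precisely this case analysis --- verifying that a suitable row set $T$ (or an alternative two-term swap) always exists for every diagram $D$ containing the forbidden configuration.
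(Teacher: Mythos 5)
Your engine --- Laplace expansion of a singular matrix built by repeating the column of $Y$ indexed by $i_2$ --- is exactly the mechanism of the paper's proof, and your base case $D_{j_1}\cap D_{j_2}=\{i_2\}$ is handled correctly. The genuine gap is the case $|D_{j_1}\cap D_{j_2}|\geq 2$ with $D_{j_1}\neq D_{j_2}$, which you defer to ``augmenting $T$ with small auxiliary row indices''; this cannot be repaired within your framework. Concretely, take $n=5$, $D_{j_1}=\{2,3\}$, $D_{j_2}=\{2,3,5\}$, so that $i_1=1$, $i_2=2$ give the forbidden configuration. Any valid pair $A\leq\{2,3\}$, $B\leq\{2,3,5\}$ satisfies $A\subseteq\{1,2,3\}$ and $B\subseteq\{1,\ldots,5\}$ with second-smallest element at most $3$. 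A disjoint valid pair with $A\cup B=T$ and $|T|=5$ would therefore force $T=\{1,2,3,4,5\}$; but then $4\in T$ can lie neither in $A$ (as $4>3$) nor in $B$ (in the second position it violates $4\leq 3$, and in the third position it excludes $5$ from $B$ and hence from $T$). So for these two columns \emph{no} choice of $T$ yields even one nonzero term in your identity, let alone two. The structural reason is that when $D_{j_1}\cap D_{j_2}$ is large, the pairs $(C_{j_1},C_{j_2})$ that actually participate in a linear relation must overlap (here $\{1,3\},\{2,3,5\}$ and $\{2,3\},\{1,3,5\}$, which share the row $3$), and a Laplace expansion over disjoint partitions of a single row set $T$ can never produce such pairs.

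The missing idea is not a cleverer $T$ but a reduction of the two column sets before expanding. The paper first chooses the configuration \emph{minimally}: it picks $i_1<i_2$ so that, in addition, for every $i_1<i<i_2$ exactly one of $(i,j_1),(i,j_2)$ lies in $D$. Writing $D_{j_r}^{(2)}=D_{j_r}\cap\{i_1+1,\ldots,i_2\}$, minimality forces $D_{j_1}^{(2)}\cap D_{j_2}^{(2)}=\{i_2\}$ and $D_{j_1}^{(2)}\cup D_{j_2}^{(2)}=\{i_1+1,\ldots,i_2\}$; the Laplace identity is then applied only to these truncated column sets with row set $T=\{i_1,\ldots,i_2\}$, i.e., your base case always holds after truncation. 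The resulting relation is spliced back into full diagrams $C\leq D$ by keeping $C_{j_r}$ equal to $D_{j_r}$ outside the window $(i_1,i_2]$ and using that $\det\left(Y_{D_{j_r}}^{C_{j_r}}\right)$ factors into the product of the three block determinants (block triangularity, since all elements of $C_{j_r}^{(2)}$ lie in $[i_1,i_2]$). You would need to add both the minimality step and this factorization to close the argument.
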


\begin{proof}
Let us locate  (any) two columns, say columns $j_1$ and $j_2$, with $1\leq j_1<j_2\leq n$, that owe  a configuration  as shown in Figure \ref{2.1}. Of course, boxes in these two columns may consist of more than one such configurations. In this case, we choose a ``minimal'' one, that is, locate   two row indices  $1\leq i_{1}<i_2\leq n$  such that
\begin{itemize}
    \item the boxes $(i_1,j_{1}),(i_1,j_{2})$ do not belong to $D$;

    \item the boxes $(i_2,j_{1}),(i_2,j_{2})$  belong to $D$;

    \item for any   $i_{1}<i<i_{2}$, exactly one of the boxes  $(i,j_{1})$ and $(i,j_{2})$ belongs to $D$.
\end{itemize}
See  Figure    \ref{fig:enter-label-OOOII} for an illustration.
     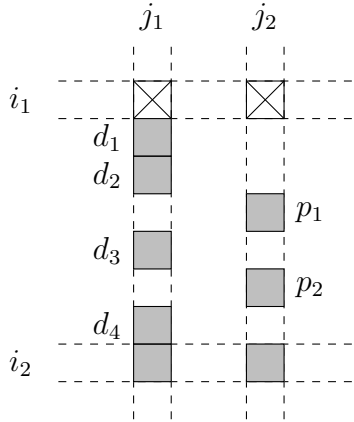
\begin{figure}[ht]
         \centering
         \begin{align*}
		\begin{tikzpicture}
			\centering
			\draw[dashed, black] (-2,3) -- (2,3);
			\draw[dashed, black] (-2,2.5) -- (2,2.5);
			\draw[dashed, black] (-2,-1) -- (2,-1);
			\draw[dashed, black] (-2,-0.5) -- (2,-0.5);
			\draw[dashed, black] (-0.5,-1.5) -- (-0.5,3.5);
			\draw[dashed, black] (-1,-1.5) -- (-1,3.5);
			\draw[dashed, black] (1,-1.5) -- (1,3.5);
			\draw[dashed, black] (0.5,-1.5) -- (0.5,3.5);
			\draw[black] (-1,2.5) rectangle (-0.5,3);
            \filldraw[lightgray] (-1,2) rectangle (-0.5,2.5);
            \filldraw[lightgray] (-1,1.5) rectangle (-0.5,2);
            \filldraw[lightgray] (-1,0.5) rectangle (-0.5,1);
            \draw[black] (-1,0.5) rectangle (-0.5,1);
            \draw[black] (-1,2) rectangle (-0.5,2.5);
            \draw[black] (0.5,2.5) rectangle (1,3);
			\draw[black] (-1,2.5) -- (-0.5,3);
			\draw[black] (0.5,2.5) -- (1,3);
			\draw[black] (-1,3) -- (-0.5,2.5);
			\draw[black] (0.5,3) -- (1,2.5);
			\draw[black] (-1,1.5) rectangle (-0.5,2);
			\filldraw [lightgray] (-1,-1) rectangle (-0.5,-0.5);
			\filldraw [lightgray] (-1,-0.5) rectangle (-0.5,0);
			\draw[black] (-1,-1) rectangle (-0.5,-0.5);
			\draw[black] (-1,-0.5) rectangle (-0.5,0);
			\filldraw [lightgray] (0.5,-1) rectangle (1,-0.5);
			\draw[black] (0.5,-1) rectangle (1,-0.5);
            \filldraw [lightgray] (0.5,1) rectangle (1,1.5);
            \filldraw [lightgray] (0.5,0) rectangle (1,0.5);
            \draw[black] (0.5,1) rectangle (1,1.5);
            \draw[black] (0.5,0) rectangle (1,0.5);
			\node at (-2.5,2.75) {$i_{1}$};
			\node at (-2.5,-0.75) {$i_{2}$};
            \node at (-1.35,2.25) {$d_{1}$};
            \node at (-1.35,1.75) {$d_{2}$};
            \node at (-1.35,0.75) {$d_{3}$};
            \node at (-1.35,-0.25) {$d_{4}$};
            \node at (1.35,1.25) {$p_{1}$};
            \node at (1.35,0.25) {$p_{2}$};
			\node at (-0.75,3.85) {$j_{1}$};
			\node at (0.75,3.85) {$j_{2}$};
		\end{tikzpicture}
	\end{align*}
         \caption{An instance of  minimal configuration in Figure \ref{2.1}.}
         \label{fig:enter-label-OOOII}
     \end{figure}
	
We shall next construct a family of  polynomials $\mathrm{det}\left(Y_{D}^C\right)$ that are linearly dependent. Let us partition  $D_{j_1}$ into
\[
D_{j_{1}}=D_{j_{1}}^{(1)}\cup D_{j_{1}}^{(2)}\cup D_{j_{1}}^{(3)},
\]
where $D_{j_{1}}^{(1)}$ (resp., $D_{j_{1}}^{(3)}$)  is the subset of elements of $D_{j_1}$ that are smaller
than $i_1$
(resp., greater than $i_2$), and $D_{j_{1}}^{(2)}$ consists of the elements of $D_{j_1}$ lying between $i_1$ and $i_2$. We similarly partition $D_{j_2}$ into
\[
D_{j_{2}}=D_{j_{2}}^{(1)}\cup D_{j_{2}}^{(2)}\cup D_{j_{2}}^{(3)}.
\]
Assume that
\begin{align*}
		D_{j_{1}}^{(2)}=\{d_{1}<\cdots <d_{s}<i_{2}\} \ \ \ \text{and}\ \ \
		D_{j_{2}}^{(2)}=\{p_{1}<\cdots<p_{t}<i_{2}\}.
\end{align*}

Consider a family of diagrams $C\leq D$ that satisfy the following conditions:
\begin{itemize}
    \item[(1)] $C_{j}=D_{j}$ for  $j\neq j_{1}$ and $j\neq j_{2}$;

    \item[(2)] $C_{j_{1}}=C_{j_{1}}^{(1)}\cup C_{j_{1}}^{(2)}\cup C_{j_{1}}^{(3)}$, where  $C_{j_{1}}^{(1)}=D_{j_{1}}^{(1)}$, $C_{j_{1}}^{(3)}=D_{j_{1}}^{(3)}$,  $C_{j_{1}}^{(2)}\leq D_{j_{1}}^{(2)}$ with the constraint that  the smallest  element of $C_{j_{1}}^{(2)}$ is greater or equal to  $i_{1}$;

    \item[(3)] $C_{j_{2}}=C_{j_{2}}^{(1)}\cup C_{j_{2}}^{(2)}\cup C_{j_{2}}^{(3)}$, where  $C_{j_{2}}^{(1)}=D_{j_{2}}^{(1)}$, $C_{j_{2}}^{(3)}=D_{j_{2}}^{(3)}$,  $C_{j_{2}}^{(2)}\leq D_{j_{2}}^{(2)}$ with the constraint that  the least element of $C_{j_{2}}^{(2)}$ is greater or equal to  $i_{1}$.
\end{itemize}
 For a diagram $C$ satisfying the above conditions, we see that
	\begin{align*}
\mathrm{det}\left(Y_{D}^{C}\right)&=\mathrm{det}	\left(Y_{D_{j_{1}}}^{C_{j_{1}}}\right)\cdot \mathrm{det}\left(Y_{D_{j_{2}}}^{C_{j_{2}}}\right)\cdot \prod_{j\neq j_1, j_2}\mathrm{det}\left(Y_{D_{j}}^{C_{j}}\right)\\[5pt]
		&=\mathrm{det}\left(Y_{D_{j_{1}}^{(2)}}^{C_{j_{1}}^{(2)}}\right)
		\cdot\mathrm{det}\left(Y_{D_{j_{2}}^{(2)}}^{C_{j_{2}}^{(2)}}\right)
		\cdot\mathrm{det}\left(Y_{D_{j_{1}}^{(1)}}^{C_{j_{1}}^{(1)}}\right)
		\cdot\mathrm{det}\left(Y_{D_{j_{2}}^{(1)}}^{C_{j_{2}}^{(1)}}\right)
		\\[5pt] &\ \ \ \ \cdot\mathrm{det}\left(Y_{D_{j_{1}}^{(3)}}^{C_{j_{1}}^{(3)}}\right)
		\cdot\mathrm{det}\left(Y_{D_{j_{2}}^{(3)}}^{C_{j_{2}}^{(3)}}\right)
		\cdot\prod_{j\neq j_1, j_2}\mathrm{det}\left(Y_{D_{j}}^{C_{j}}\right).
	\end{align*}
The last five factors in the above equality   are fixed determinants.
We shall choose distinct  $C_{j_{1}}^{(2)}\leq D_{j_{1}}^{(2)}$ and $C_{j_{2}}^{(2)}\leq D_{j_{2}}^{(2)}$ such that the products $\mathrm{det}\left(Y_{D_{j_{1}}^{(2)}}^{C_{j_{1}}^{(2)}}\right)
	\cdot\mathrm{det}\left(Y_{D_{j_{2}}^{(2)}}^{C_{j_{2}}^{(2)}}\right)$ are linearly dependent,  so that the corresponding polynomials $\mathrm{det}\left(Y_{D}^C\right)$ are linearly dependent.
	
By the choices of  $i_{1}$ and $i_{2}$, we have
	\begin{align*}
		D_{j_{1}}^{(2)}\cap D_{j_{2}}^{(2)}=\{i_{2}\}
	\end{align*}
	and
	\begin{align*}
		D_{j_{1}}^{(2)}\cup D_{j_{2}}^{(2)}=\{i_{1}+1,i_{1}+2,\ldots,i_{2}\}.
	\end{align*}
Consider the square matrix:
	\begin{align*}
		M=
		\begin{bmatrix}
			y_{i_{1},i_{2}} & y_{i_{1},i_{1}+1} & y_{i_{1},i_{1}+2} & \cdots &y_{i_{1},i_{2}}\\
			y_{i_{1}+1,i_{2}} & y_{i_{1}+1,i_{1}+1} & y_{i_{1}+1,i_{1}+2} & \cdots &y_{i_{1}+1,i_{2}}\\
			y_{i_{1}+2,i_{2}} & 0 & y_{i_{1}+2,i_{1}+2} & \cdots &y_{i_{1}+2,i_{2}}\\
			\vdots &\vdots&\vdots&\ddots&\vdots\\
			y_{i_{2},i_{2}} & 0 & 0 & \cdots &y_{i_{2},i_{2}}\\
		\end{bmatrix}.
	\end{align*}
Since the first and  the last columns   are the same, we have $\det(M)=0$. We next apply the  Laplace expansion to $M$ along the columns indexed by $D_{j_{1}}^{(2)}=\{d_{1},d_{2},\ldots, d_{s},i_{2}\}$ (Here, $i_2$ indicates the last column). Notice that the remaining columns of $M$ are indexed by $D_{j_{2}}^{(2)}=\{p_{1},\ldots,p_{t},i_{2}\}$  (Here, $i_2$ indicates the first  column).
Applying the  Laplace expansion, we obtain
\begin{align}\label{78uu}
\det(M)=\sum_{R}
	 \mathrm{sgn}(R)\cdot	 \det\left(M_{D_{j_{1}}^{(2)}}^R\right)\cdot
  \det\left(M_{D_{j_{2}}^{(2)}}^{\overline{R}}\right)=0,
\end{align}
where $R$ is a subset of $\{i_1,  i_1+1, \ldots, i_2\}$ with $s+1$ elements, $\overline{R}=\{i_1,  i_1+1, \ldots, i_2\}\setminus R$, and $\mathrm{sgn}(R)$ is a sign $\pm 1$  determined by $S$.
It is easily seen that
\[
\det\left(M_{D_{j_{1}}^{(2)}}^R\right)=\det\left(Y_{D_{j_{1}}^{(2)}}^R\right)
\]
and
\[
\det\left(M_{D_{j_{2}}^{(2)}}^{\overline{R}}\right)=(-1)^t\det\left(Y_{D_{j_{2}}^{(2)}}^{\overline{R}}\right).
\]
Hence,  \eqref{78uu} can be rewritten as
\begin{align} \label{TREQ}
\det(M)=\sum_{R} (-1)^t
	 \mathrm{sgn}(R)\cdot	 \det\left(Y_{D_{j_{1}}^{(2)}}^R\right)\cdot
  \det\left(Y_{D_{j_{2}}^{(2)}}^{\overline{R}}\right)=0.
\end{align}

Set $C_{j_{1}}^{(2)}=R$ and $C_{j_{2}}^{(2)}=\overline{R}$. As $R$ varies over the sum in  \eqref{TREQ}, we
get a family of polynomials $\det\left(Y_{D_{j_{1}}^{(2)}}^R\right)\cdot
  \det\left(Y_{D_{j_{2}}^{(2)}}^{\overline{R}}\right)$ that are linearly dependent. Thus the corresponding diagrams $C\leq D$, that satisfy the conditions (1), (2) and (3), are a family of diagrams such that $\det\left(Y_{D}^C\right)$   are linearly dependent.
This completes the proof.
\end{proof}

\vskip 3mm \noindent {\bf Acknowledgments.}
This work was supported by the National Natural Science Foundation of China (11971250, 12001398).

\bigbreak

\footnotesize{

\textsc{(Zhuowei Lin) Center for Combinatorics, Nankai University, Tianjin 300071, P.R. China.}

{\it
Email address: \tt zwlin0825@163.com}

\medbreak

\textsc{(Simon C.Y. Peng) Center for Applied Mathematics, Tianjin University, Tianjin 300072, P.R. China.}

{\it
Email address: \tt pcy@tju.edu.cn}

\medbreak

\textsc{(Sophie C.C. Sun) Department  of  Mathematics, University  of  Finance  and  Economics, Tianjin  300222, P.R. China.}

{\it
Email address: \tt sophiesun@tjufe.edu.cn.}

}


\begin{thebibliography}{1}

\bibitem{BH}
P. Br\"and\'en and J. Huh,  Lorentzian polynomials, Ann. of Math.   192 (2020),   821--891.

\bibitem{FG-1}
N.J.Y. Fan and P.L. Guo,
Vertices of Schubitopes,
J. Combin. Theory Ser. A  177 (2021), 105311.

\bibitem{fan2022upper}
N.J.Y. Fan and P.L. Guo,
Upper bounds of schubert polynomials,
Sci. China Math. 65 (2022),  1319--1330.

\bibitem{2018Schubert}
A. Fink, K. M{\'e}sz{\'a}ros and A. St. Dizier,
Schubert polynomials as integer point transforms of generalized permutahedra,
Adv. Math. 332 (2018), 465--475.

\bibitem{fink2021zero}
A. Fink, K. M{\'e}sz{\'a}ros and A. St. Dizier,
Zero-one Schubert polynomials,
Math. Z. 297 (2021),   1023--1042.

\bibitem{hodges2023multiplicity}
R. Hodges and A. Yong,
Multiplicity-free key polynomials,
Ann. Comb. 27 (2023),  387--411.

\bibitem{Huh}
J. Huh, J.P. Matherne,  K. M{\'e}sz{\'a}ros  and A. St. Dizier, Logarithmic concavity of Schur and related polynomials, Trans. Amer. Math. Soc. 375 (2022), 4411--4427.


\bibitem{kraskiewicz1987foncteurs}
W. Kraskiewicz and P. Pragacz,
Foncteurs de schubert,
C. R. Acad. Sci. Paris S{\'e}r. I Math. 304 (1987),  209--211.

\bibitem{kraskiewicz2004schubert}
W. Kra{\'s}kiewicz and P. Pragacz,
Schubert functors and Schubert polynomials,
European J. Combin. 25 (2004),  1327--1344.

\bibitem{magyar1998schubert}
P. Magyar,
Schubert polynomials and Bott--Samelson varieties,
Comment. Math. Helv. 73 (1998), 603--636.

\bibitem{meszaros2021principal}
K. M{\'e}sz{\'a}ros, A. St. Dizier and A. Tanjaya,
Principal specialization of dual characters of flagged weyl modules,
Electron, J. Combin. 28 (2021),   Paper No. 4.17, 12 pp.

\bibitem{2001The}
B.E. Sagan, The symmetric group. Representations, combinatorial algorithms, and symmetric functions. Second edition, Graduate Texts in Mathematics, 203,  Springer-Verlag, New York, 2001.

\end{thebibliography}
\end{document}